\newtheorem{prop}{Proposition}[section]
\newtheorem{thm}[prop]{Theorem}
\newtheorem{lemma}[prop]{Lemma}
\theoremstyle{remark}
\newtheorem{rmk}[prop]{Remark}
\theoremstyle{definition}
\newtheorem{definition}[prop]{Definition}
\title{Mirror Transitions and the Batyrev-Borisov Construction}
\author{Karl Fredrickson}
\date{}
\begin{document}
\maketitle
\begin{abstract} We consider examples of extremal transitions between families of Calabi-Yau complete intersection threefolds in toric varieties, which are induced by toric embeddings of one toric variety into the other.  We show that the toric map induced by the linear dual of the embedding induces a birational morphism between the mirror Calabi-Yau families, and in one case show that it can be extended to the full mirror transition between mirror families. \end{abstract}
\section{Introduction}
Suppose that $X$ and $Y$ are smooth Calabi-Yau threefolds, which we will also take to be projective algebraic varieties over $\mathbb{C}$.  We say that $X$ and $Y$ are related by a {\it conifold transition} if there exists a degeneration of $X$ to a singular variety $X_0$ with a finite number of ordinary double points (also called nodes), and $Y$ can be obtained from $X_0$ by a small resolution of singularities.  A small resolution replaces each node with a projective line so that the resolved variety has a trivial canonical bundle.  There is a conjecture that all pairs of Calabi-Yau threefolds can be obtained from one another by a finite number of such transitions; see the paper \cite{reid}.  More generally, one can consider the idea of an {\it extremal transition} of Calabi-Yau threefolds, defined in \cite{morrison}, where $Y$ is still obtained from $X$ by degenerating and resolving, but the singularities on $X_0$ may be much worse than conifold singularities.  For more background about conifold transitions, and other types of transitions between Calabi-Yau threefolds, see the paper \cite{rossi}.

A node is a singularity which is locally isomorphic, up to a complex analytic change of coordinates, to the singularity of the affine variety $A \subseteq$ Spec $\mathbb{C}[x,y,z,w]$ defined by $xy-zw = 0$.  A small resolution of an ordinary double point replaces the singularity with a $\mathbb{P}^1$, and is locally isomorphic, again up to a complex analytic change of coordinates, to the resolution of the singularity of $A$ given by blowing up the ambient space $\mathbb{C}^4$ on the plane $x = z = 0$.  

The paper \cite{morrison} proposes an elegant connection between extremal transitions and the concept of mirror symmetry introduced by physicists studying string theory.  If $X$ and $Y$ are related by an extremal transition, and $X^*$ and $Y^*$ are their mirrors, then Morrison proposes that $X^*$ and $Y^*$ are also related by an extremal transition, but with the degeneration and resolution switched.  In other words, there should be a degeneration of $Y^*$ to a singular variety $Y^*_0$, and $X^*$ should be a resolution of $Y^*_0$.  In this paper, we study a particular example of a conifold transition and propose that the ``mirror'' conifold transition can be realized by a toric map between the ambient toric varieties of the mirror Calabi-Yau threefolds as provided by the Batyrev-Borisov construction.  The Batyrev-Borisov construction, described in \cite{BB}, is the fundamental method for constructing mirrors of Calabi-Yau manifolds which are complete intersections in Gorenstein toric Fano varieties.  

The relationship between extremal transitions and the Batyrev-Borisov construction has also been explored by Mavlyutov in \cite{mav}, where he gives a construction based on deforming a Calabi-Yau hypersurface in a toric variety by embedding the toric variety as a complete intersection in a higher dimensional toric variety.  This is exactly how the deformations arise in both of the examples in this paper, although Mavlyutov's methods seem to require Minkowski sum decompositions of reflexive polytopes that do not exist in our situations.   

In \cite{morrison}, Morrison himself discusses the relationship between extremal transitions and Batyrev's construction in \cite{Ba}, which is really just the Batyrev-Borisov construction in the case of hypersurfaces.  One may consider a simple inclusion $P \subseteq Q$ of reflexive polytopes as an extremal transition between Calabi-Yau hypersurfaces in the toric varieties corresponding to $P$ and $Q$.  According to Batyrev's construction, the mirror hypersurfaces will lie in the toric varieties corresponding to the dual polytopes $P^*$ and $Q^*$, which will obey the reverse inclusion $Q^* \subseteq P^*$.  This induces an extremal transition between the mirror hypersurfaces.  However, this simpler technique cannot be used in the examples we will look at, since we will deal with a Calabi-Yau manifold which is a codimension two complete intersection in a toric variety, rather than a hypersurface.

Conifold transitions were used by Batyrev et al. in \cite{BCKS} to construct mirrors of complete intersection Calabi-Yau threefolds in Grassmannians.  The main example in this paper is related to the case of quartic hypersurfaces in the Grassmannian $G(2,4)$.  The approach of Batyrev et al.~is to degenerate the Grassmannian $G(k,n)$ to a Gorenstein toric Fano variety called $P(k,n)$.  After the degeneration, a family of complete intersection Calabi-Yau threefolds in $G(k,n)$, $\mathcal{X}$, will degenerate to a family of complete intersections $\mathcal{X}_0$ in $P(k,n)$ where the general member of the family has isolated nodes as singularities.  A so-called MPCP resolution of $P(k,n)$ (as defined in \cite{Ba}) gives a small resolution of general members of $\mathcal{X}_0$, thus giving a nonsingular family $\widehat{\mathcal{X}}_0$ which is related by a conifold transition to the original family $\mathcal{X}$.  Since $\widehat{\mathcal{X}}_0$  consists of MPCP resolutions of complete intersections in the Gorenstein toric Fano variety $P(k,n)$, the Batyrev-Borisov construction can be applied directly to the family $\widehat{\mathcal{X}}_0$ to yield the mirror family $\widehat{\mathcal{X}}^{*}_0$.  By Morrison's philosophy, the mirror family to the original family $\mathcal{X}$ should be produced by a mirror conifold transition of $\widehat{\mathcal{X}}^{*}_0$.  That is, we should obtain the mirror family to $\mathcal{X}$ after an appropriate degeneration of $\widehat{\mathcal{X}}^{*}_0$ to a family where the general member has nodes, followed by a small resolution. 

In \cite{BCKS} this degeneration is accomplished by imposing a relation on the coefficients in the equations describing the family, so the degenerate family is actually a subfamily of $\widehat{\mathcal{X}}^{*}_0$.  In the case of quartic hypersurfaces in $G(2,4)$, $\widehat{\mathcal{X}}^*_0$ is contained in an MPCP resolution of a toric variety $X(\Delta^*_{P(2,4)})$ (the toric variety given by taking cones over the Newton polytope of quartics on $P(2,4)$, which is reflexive).  We will refer to the degenerate nodal subfamily of $\widehat{\mathcal{X}}^{*}_0$ in the $G(2,4)$ case, which consists of hypersurfaces in an MPCP resolution of $X(\Delta^*_{P(2,4)})$, as $\mathcal{X}^*_C$.

Quartic hypersurfaces in $G(2,4)$ are also smooth members of the family $\mathcal{X}_{(2,4)}$ of $(2,4)$ complete intersections in $\mathbb{P}^5$, since $G(2,4)$ is defined by a single quadratic equation in $\mathbb{P}^5$.  Being complete intersections in the Gorenstein toric Fano variety $\mathbb{P}^5$, the Batyrev-Borisov construction also provides a way to find the mirror of this family of Calabi-Yau threefolds.  Thus there are two possible methods that may be used to find the mirror family of quartic hypersurfaces in $G(2,4)$.  Let us denote by $\mathcal{X}^*_{BB}$ the mirror family given by the Batyrev-Borisov construction. 

We start by establishing a birational morphism between the families $\mathcal{X}^*_C$ and $\mathcal{X}^*_{BB}$.  This birational morphism is realized by a toric (monomial) map between the open tori of the toric varieties containing the families.  Furthermore, as a linear map between fans, this map is just the linear dual to the toric embedding of $P(2,4)$ into $\mathbb{P}^5$ as the zero locus of a quadratic binomial equation.  

This birational morphism will only define a map between open subsets of members of the two Calabi-Yau families, so to remedy this, we show that the map can be extended to a map between compact toric varieties containing the compactified and resolved families.  Technically, we show that the map between open tori can be extended to a toric morphism between properly chosen MPCP resolutions of the ambient toric varieties.  This map gives an explicit toric isomorphism between the two different mirror constructions for the family of quartic hypersurfaces in $G(2,4)$, since in \cite{BCKS} the mirror family is given by small resolution of $\mathcal{X}^*_C$.

In the last section, we look at another example to show that the same method can give some results in other cases.




\section{Notations and conventions}

Throughout the paper, we will set $M' = \mathbb{Z}^5$, $M = \mathbb{Z}^4$, $M_\mathbb{R} = M \otimes \mathbb{R}$, and $M'_\mathbb{R} = M' \otimes \mathbb{R}$.  Also define the dual spaces $N = Hom(M, \mathbb{Z})$, $N' = Hom(M', \mathbb{Z})$, $N_\mathbb{R} = N \otimes \mathbb{R}$, and $N'_\mathbb{R} = N' \otimes \mathbb{R}$.

We will denote the convex hull of the sets $S_1, \dots, S_n$ in a real vector space by $Conv(S_1, \dots, S_n)$.  If $v_1, \dots, v_n$ are vectors in a real vector space, then we define the cone over $v_1, \dots, v_n$ as 
$$Cone(v_1, \dots, v_n) = \{r_1 v_1 + \cdots + r_n v_n \ | \ r_1, \dots, r_n \in \mathbb{R}_{\geq 0} \}.$$

If $P$ is a compact convex polytope in a real vector space $V$ with the origin in its interior, we will use $\Sigma(P)$ to denote the fan consisting of cones over the proper faces of $P$. We will also sometimes use $X(P)$ to denote the toric variety associated to $\Sigma(P)$.  If $\Sigma$ is a fan, then $X(\Sigma)$ will denote the toric variety associated to $\Sigma$.  The dual polytope of $P$, $P^*$, is contained in the dual vector space $V^*$ and defined as 
$$\{u \in V^* \ | \ \langle p, u \rangle \geq -1, \forall p \in P \}.$$
Here, $\langle , \rangle$ denotes the natural real-valued pairing between $V$ and $V^{*}$.

We define the Newton polytope of a real-valued function $\varphi$ on a real vector space $V$ by $$Newt(\varphi) = \{ u \in V^{*} \ | \ \langle u,v \rangle \geq -\varphi(v), \forall v \in V \}.$$  If $\Sigma$ is a complete fan and $\varphi : \Sigma \rightarrow \mathbb{R}$ is an integral piecewise linear function on $\Sigma$, then $\varphi$ defines a line bundle $\mathcal{L}(\varphi)$ on $X(\Sigma)$, and $Newt(\varphi)$ is the Newton polytope of global sections of $\mathcal{L}(\varphi)$.  Lattice points in the Newton polytope correspond to monomial global sections of $\mathcal{L}(\varphi)$.

All convex functions in this paper will be lower convex, meaning that $$\varphi(ax+by) \leq a \varphi(x) + b \varphi(y)$$ for any $x, y \in V$ and $a, b \in \mathbb{R}$ with $0 \leq a, b \leq 1$, $a+b=1$.  With our conventions, ample line bundles on a toric variety $X(\Sigma)$ are associated to strictly lower convex piecewise linear functions $\varphi : \Sigma \rightarrow \mathbb{R}$.  (Note that some toric geometry books such as \cite{CLS} use the opposite convention, so for them ample line bundles would be associated to strictly upper convex functions!)

A reflexive polytope $\Delta \subseteq V$ is a lattice polytope with the origin in its interior, whose dual polytope $\Delta^* \subseteq V$ is also a lattice polytope.  In this paper we will often use the concept of a ``crepant resolution'' or ``crepant subdivision'' of the fan $\Sigma(\Delta)$.  A crepant subdivision of $\Sigma(\Delta)$ is another complete fan $\widehat{\Sigma}(\Delta)$ which is a subdivision of $\Sigma(\Delta)$, and whose rays are all rays over lattice points in $\Delta$. 

\section{Review of mirror construction via conifold transitions}

The Pl\" ucker embedding of $G(2,4)$ in $\mathbb{P}^5$ is defined by the single quadratic equation $$z_0z_1-z_2z_3+z_4z_5=0$$ where $z_0, \dots, z_5$ are homogeneous coordinates on $\mathbb{P}^5$.  Intersecting $G(2,4)$ with a generic degree four hypersurface in $\mathbb{P}^5$ gives a smooth Calabi-Yau threefold which is part of the family of $(2,4)$ complete intersections in $\mathbb{P}^5$.  

The method used by \cite{BCKS} to construct the mirror of this family of threefolds starts by degenerating $G(2,4)$ to the toric variety $P(2,4)$ via the equation $$tz_0 z_1-z_2 z_3+z_4z_5=0$$ in $\mathbb{P}^5 \times \mathbb{C}$.  Fixing a value of $t$ with $t \neq 0$ results in a variety isomorphic to $G(2,4)$.  When $t=0$ we get the singular toric variety $P(2,4)$, defined by the equation $z_2z_3=z_4z_5$.  The singular locus of $P(2,4)$ consists of the line of nodes $z_2=z_3=z_4=z_5=0$, and a generic degree four hypersurface in $\mathbb{P}^5$ will intersect the line transversely in four points, resulting in a Calabi-Yau variety with four nodes as singularities.  

There exist toric resolutions of $P(2,4)$ to a nonsingular toric variety $\widehat{P}(2,4)$.  The toric variety $\widehat{P}(2,4)$ can be obtained by blowing up $P(2,4)$ on a toric divisor of the form $z_i=z_j=0$ for any choice of $i \in \{ 2,3 \}$, $j \in \{4,5 \}$.  Such a resolution induces a small resolution of the four nodes on a generic Calabi-Yau hypersurface in $P(2,4)$.  The degeneration of a Calabi-Yau hypersurface $X$ in $G(2,4)$ to a hypersurface in $P(2,4)$, followed by this small resolution, is the conifold transition used by \cite{BCKS} to construct the mirror of $X$.

To perform the mirror conifold transition, \cite{BCKS} starts with the mirror family to the family $\mathcal{X}_{\widehat{P}(2,4)}$ of Calabi-Yau hypersurfaces in $\widehat{P}(2,4)$ as provided by Batyrev's construction in \cite{Ba}.  The fan for $P(2,4)$ may be described as the cones over faces of a reflexive polytope $\Delta_{P(2,4)} \subseteq  M_\mathbb{R} \cong \mathbb{R}^4$ with six vertices, defined as $$\Delta_{P(2,4)} = Conv(u_1,u_2,u_3, u_4, u_5, u_6)$$ where the vertices $u_i$ are defined as 
$$u_1 = f_1, \ u_2 = f_2, \ u_3 = f_3,$$
$$u_4 = f_4, \ u_5 = -f_1-f_2-f_3, \ u_6 = f_1+f_2-f_4,$$
with $f_1, \dots, f_4$ being the standard basis of $M_\mathbb{R}$.
According to Batyrev's construction, the mirror family to $\mathcal{X}_{\widehat{P}(2,4)}$ is given by a Calabi-Yau compactification of the subvariety of the torus $T = \hbox{Spec}\ \mathbb{C}[M]$ defined by $$-1+a_1X_1+a_2X_2+a_3X_3+a_4X_4+a_5(X_1X_2X_3)^{-1}+a_6X_1X_2X^{-1}_4= 0$$ where the $a_i$ are generic coefficients and $X_i = z^{f_i}$.  The degenerate subfamily of this family is given by the additional equation $a_1a_2 = a_4a_6$.  This equation arises from the fact that $X_1$, $X_2$, $X_4$ and $X_1X_2X^{-1}_4$ are the monomials corresponding to the vertices of the square face $Conv(u_1, u_2, u_4, u_6)$ of $\Delta_{P(2,4)}$, with $\{ u_1, u_2 \}$ and $\{ u_4, u_6 \}$ being pairs of opposite corners.  The toric stratum of $P(2,4)$ corresponding to the cone over this face is the singular locus of $P(2,4)$ and the location of the nodes of a generic Calabi-Yau quartic hypersurface in $P(2,4)$.

By Batyrev's construction, a Calabi-Yau compactification of this family exists in the toric variety given by an MPCP resolution of the toric variety $X(\Delta_{P(2,4)}^{*})$, which is defined by the fan of cones over the faces of the dual polytope $\Delta_{P(2,4)}^{*} \subseteq N_\mathbb{R}$.  The face of $\Delta_{P(2,4)}^{*}$ that is dual to the square $Conv(u_1, u_2, u_4, u_6)$ is the line segment $$L = Conv((-1,-1,-1,-1),(-1,-1,3,-1)),$$ using the basis for $N_\mathbb{R}$ which is dual to $f_1,\dots, f_4$.  After an MPCP resolution of $X(\Delta_{P(2,4)}^{*})$, $L$ will be subdivided into four smaller line segments.  The open subset of the MPCP resolution corresponding to the union of the cones over these line segments will contain four nodes of a generic Calabi-Yau hypersurface in the degenerate subfamily.  A small resolution of these nodes yields the mirror of the original Calabi-Yau manifold $X$.    

As in \cite{BCKS}, using the $(\mathbb{C}^{*})^4$ action on $T$ we may assume without loss of generality that $a_1=a_2=a_3=a_4=1$, so that the family becomes
\begin{equation} \label{cfoldfamily}
-1+X_1+X_2+X_3+X_4+a_5(X_1X_2X_3)^{-1}+a_6X_1X_2X^{-1}_4= 0
\end{equation}
and the degenerate subfamily is defined by $a_6 =1$.  We will refer to this degenerate family as $\mathcal{X}^{*}_{C}$.  

\section{The Batyrev-Borisov mirror and its birational equivalence}

Now we establish a birational morphism between the family $\mathcal{X}^{*}_{C}$ and the mirror family $\mathcal{X}^*_{BB}$ to $(2,4)$ hypersurfaces in $\mathbb{P}^5$ as given by the Batyrev-Borisov construction.  

Let $e_1, \dots, e_5$ be the standard basis for $M'_\mathbb{R} \cong \mathbb{R}^5$ and let $\Delta \subseteq M'_\mathbb{R}$ be the reflexive polytope $Conv(e_1,\dots, e_5, -e_1-e_2-e_3-e_4-e_5)$.  Then the fan $\Sigma(\Delta)$ consisting of cones over the proper faces of $\Delta$ is the fan for $\mathbb{P}^5$.  Define the piecewise linear functions $\varphi_1$, $\varphi_2$ on $\Sigma(\Delta)$ by $\varphi_1(e_i) = 1$ for $i = 1$, 2, 3, 4, and $\varphi_1(e_5) = \varphi_1(-e_1-\cdots-e_5) = 0$, and $\varphi_2(e_i) = 0$ for $i = 1$, 2, 3, 4, and $\varphi_2(e_5) = \varphi_2(-e_1-\cdots-e_5) = 1$.  The line bundles on $\mathbb{P}^5$ associated to $\varphi_1$ and $\varphi_2$ are $\mathcal{O}_{\mathbb{P}^5}(4)$ and $\mathcal{O}_{\mathbb{P}^5}(2)$ respectively.

The Batyrev-Borisov construction now states that the mirror family of $(2,4)$ complete intersections in $\mathbb{P}^5$ is a Calabi-Yau compactification of the subvariety of $T'= \hbox{Spec} \ \mathbb{C}[M']$
defined by 
\begin{align*}
b_1 Y_1 + b_2 Y_2 + b_3 Y_3 + b_4 Y_4 &= 1 \\
b_5 Y_5 + b_6 (Y_1Y_2Y_3Y_4Y_5)^{-1} &= 1
\end{align*}
with the $b_i$ generic coefficients and $Y_i = z^{e_i}$.  By the $(\mathbb{C}^{*})^5$ action on $T'$ we may assume without loss that $b_1=b_2=b_3=b_5=b_6=1$, so the equations reduce to 
\begin{align*}
Y_1+Y_2+Y_3+b_4 Y_4 &= 1 \\
Y_5 + (Y_1Y_2Y_3Y_4Y_5)^{-1} &= 1.
\end{align*}
We will refer to the second family as $\mathcal{X}^*_{BB}$.

In the following theorem, we will define a birational equivalence between the families $\mathcal{X}^*_{BB}$ and $\mathcal{X}^*_{C}$ by defining a map between the tori $T'$ and $T$.  These families should properly be considered to lie in $T' \times \mathbb{C}$ and $T \times \mathbb{C}$, with the $\mathbb{C}$ factor representing the parameters $b_4$ and $a_5$, respectively, and a morphism between the families could be given as a map between $T' \times \mathbb{C}$ and $T \times \mathbb{C}$ which maps $\mathcal{X}^*_{BB}$ to $\mathcal{X}^*_{C}$ and respects projection to $\mathbb{C}$.  However, in this case, it is possible to realize the map as a single map from $T'$ to $T$; in other words, the map does not depend on the parameter $b_4$.  

\begin{thm} \label{birational1} After setting $b_4 = a_5$, the families $\mathcal{X}^*_{BB}$ and $\mathcal{X}^{*}_C$ are birationally equivalent via the map $f : T' \rightarrow T$ defined by $X_1 = (Y_2Y_3Y_4Y_5)^{-1}$, $X_2 = Y_2Y_5$, $X_3 = Y_3$, $X_4 = (Y_1Y_3Y_4Y_5)^{-1}$. \end{thm}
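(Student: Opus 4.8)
The plan is to prove birational equivalence directly, by exhibiting an explicit rational inverse to $f$ and checking that $f$ and its inverse each carry one family into the other. Since $f$ is a monomial map, I would begin by computing the images under $f$ of the Laurent monomials appearing in the defining equation of $\mathcal{X}^{*}_C$. A direct substitution gives $(X_1X_2X_3)^{-1} = Y_4$, $\ X_1X_2X_4^{-1} = Y_1Y_5$, and, writing the fifth monomial out, $X_1 = Y_1\,(Y_1Y_2Y_3Y_4Y_5)^{-1}$ and $X_4 = Y_2\,(Y_1Y_2Y_3Y_4Y_5)^{-1}$. Using the second equation of $\mathcal{X}^{*}_{BB}$ to replace $(Y_1Y_2Y_3Y_4Y_5)^{-1}$ by $1-Y_5$, the pullback of the $\mathcal{X}^{*}_C$ equation (with $a_6=1$ and $a_5=b_4$) collapses: all the terms involving $Y_5$ cancel in pairs, and what remains is exactly $-1+(Y_1+Y_2+Y_3+b_4Y_4)$, which vanishes by the first equation of $\mathcal{X}^{*}_{BB}$. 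This shows $f$ maps $\mathcal{X}^{*}_{BB}$ into $\mathcal{X}^{*}_C$.

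Next I would construct the inverse. The relations above can be solved for the $Y_i$: one reads off $Y_3 = X_3$ and $Y_4 = (X_1X_2X_3)^{-1}$, and from $X_2 = Y_2Y_5$ together with $X_4 = Y_2(1-Y_5)$ one recovers $Y_2 = X_2+X_4$ and $Y_5 = X_2/(X_2+X_4)$, and finally $Y_1 = X_1(X_2+X_4)/X_4$. This defines a rational map $g : T \dashrightarrow T'$. Note that $g$ is \emph{not} a monomial map, even though $f$ is, precisely because the factor $X_2+X_4$ is a genuine sum; $g$ is regular wherever $X_2+X_4 \neq 0$, which is a dense open set. I would then verify that $g$ carries $\mathcal{X}^{*}_C$ into $\mathcal{X}^{*}_{BB}$: substituting the formulas gives $Y_1Y_2Y_3Y_4Y_5 = (X_2+X_4)/X_4$, so $(Y_1Y_2Y_3Y_4Y_5)^{-1} = 1-Y_5$ automatically and the second equation of $\mathcal{X}^{*}_{BB}$ holds identically, while the first equation $Y_1+Y_2+Y_3+b_4Y_4 = 1$ reduces, after expanding $X_1(X_2+X_4)/X_4 = X_1 + X_1X_2X_4^{-1}$, to precisely the defining equation of $\mathcal{X}^{*}_C$.

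Finally I would check that $f$ and $g$ are mutually inverse as rational maps. The composite $f\circ g$ is a routine computation giving $X_i \mapsto X_i$ for each $i$, using only the cancellations among the monomials. The composite $g\circ f$ returns $Y_i \mapsto Y_i$ as well; here, however, the verification is not an identity on all of $T'$ but genuinely requires the equations of $\mathcal{X}^{*}_{BB}$ (again through the substitution $(Y_1Y_2Y_3Y_4Y_5)^{-1} = 1-Y_5$), for instance to see that $Y_2 = X_2+X_4 = Y_2Y_5 + Y_2(1-Y_5)$.

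This last point is where I expect the only genuine subtlety to lie. As a map of tori, $f$ is induced by an injective lattice map $M \to M'$ of rank four, so its fibers over $T$ are one-dimensional; $f$ is therefore far from birational on the ambient tori. What makes the theorem true is that the second defining equation of $\mathcal{X}^{*}_{BB}$ fixes the one-dimensional fiber direction, so that $f$ becomes generically injective once restricted to $\mathcal{X}^{*}_{BB}$. Thus the weight of the argument falls not on the forward substitution but on the construction of $g$ and the verification that $g\circ f$ is the identity on the family, which is exactly the step where the constraint equations must be invoked.
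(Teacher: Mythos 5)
Your proposal is correct and takes essentially the same route as the paper: the inverse you construct, $Y_1 = X_1(X_2+X_4)/X_4$, $Y_2 = X_2+X_4$, $Y_3 = X_3$, $Y_4 = (X_1X_2X_3)^{-1}$, $Y_5 = X_2/(X_2+X_4)$, is precisely the paper's birational inverse, which the paper writes in factored form (e.g.\ $Y_2 = (1+X_4X_2^{-1})X_2$) after factoring the defining equation of $\mathcal{X}^*_C$. Your write-up simply carries out in full the verification the paper leaves as ``straightforward to check,'' and adds a correct observation about why restricting to the family (via the second equation of $\mathcal{X}^*_{BB}$) is what collapses the one-dimensional fibers of the monomial map $f$ and makes it birational.
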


\begin{proof} First we observe that the defining equation for $\mathcal{X}^{*}_C$, equation \eqref{cfoldfamily}, may be factored as 
$$-1+(1+X_4X^{-1}_2)(X^{-1}_4X_1X_2+X_2)+X_3+a_5(X_1X_2X_3)^{-1}=0.$$
After doing this, it is straightforward to check that the rational map from $T$ to $T'$ defined by
\begin{align*}
Y_1 &= (1+X_4X^{-1}_2)(X^{-1}_4X_1X_2) \\
Y_2 &= (1+X_4X^{-1}_2)X_2 \\ 
Y_3 &= X_3 \\
Y_4 &= (X_1X_2X_3)^{-1} \\
Y_5 &= (1+X_4X^{-1}_2)^{-1} 
\end{align*}
when restricted to any member of $\mathcal{X}^{*}_C$, is a birational inverse for the restriction of $f$ to the corresponding member of $\mathcal{X}^*_{BB}$.\end{proof}

\section{Extension to the ambient toric varieties}

Because the map $f : T' \rightarrow T$ is defined by monomials, it is associated to a $\mathbb{Z}$-linear map $g : M \rightarrow M'$ and its dual map $g^{*} : N' \rightarrow N$. In the standard bases of $M$ and $M'$, $g$ is given by the matrix
\[ \left( \begin{array}{cccc}
0 & 0 & 0 & -1 \\
-1 & 1 & 0 & 0 \\
-1 & 0 & 1 & -1 \\
-1 & 0 & 0 & -1 \\ 
-1 & 1 & 0 & -1 \\
\end{array} \right)\]
acting on the left (with elements of $M$ and $M'$ written as column vectors), and in the dual bases of $N$ and $N'$, $g^*$ is defined by its transpose.  The kernel of $g^{*}$ is easily seen to be the $\mathbb{Z}$-span of the vector $(1,1,0,0,-1)$. We will also use $g$ and $g^*$ to denote the extensions of these $\mathbb{Z}$-linear maps to maps from $M_\mathbb{R}$ to $M'_\mathbb{R}$ and $N'_\mathbb{R}$ to $N_\mathbb{R}$. 

\begin{rmk} Although our focus for the rest of the paper will be $g^*$ acting as a map between the toric varieties on the mirror side, one can check that the map $g : M_\mathbb{R} \rightarrow M'_\mathbb{R}$ defines a map of fans $g: \Sigma(\Delta_{P(2,4)}) \rightarrow \Sigma(\Delta)$ which embeds $P(2,4)$ into $\mathbb{P}^5$ as the zero locus of the equation $$z^{e^*_1+e^*_2-e^*_5} = z^0.$$  Here $e^*_1, \dots, e^*_5$ is the dual basis to the standard basis $e_1, \dots, e_5$ of $M'_\mathbb{R}$.  Because $e^*_1+e^*_2-e^*_5$ and $0$ are both contained in $Newt(\varphi_2)$, we may consider both sides of the equation to lie in $\mathcal{O}_{\mathbb{P}^5}(2)$, with $z^{e^*_1+e^*_2-e^*_5}$ the quadratic monomial vanishing on the divisors of $\mathbb{P}^5$ corresponding to the vertices $e_1$ and $e_2$ of $\Delta$, and $z^0$ the monomial vanishing on the divisors corresponding to $e_5$ and $-e_1-e_2-e_3-e_4-e_5$.  (Also note that $e^*_1+e^*_2-e^*_5$ generates the kernel of $g^*$.)\end{rmk} 

Once the family $\mathcal{X}^*_{BB}$ is compactified and resolved by the Batyrev-Borisov construction, it will lie in an MPCP resolution of a certain Gorenstein toric Fano variety with a fan consisting of cones over a polytope $\nabla \subseteq N'_\mathbb{R}$.  With $\varphi_1, \varphi_2 : M'_\mathbb{R} \rightarrow \mathbb{R}$ defined as before, set $\nabla_1 = Newt(\varphi_1)$ and $\nabla_2 = Newt(\varphi_2)$.  Then the Batyrev-Borisov construction in \cite{BB} says that $\nabla = Conv(\nabla_1, \nabla_2)$.

One may verify that in the basis of $N'_\mathbb{R}$ that is dual to the standard basis of $M'_\mathbb{R} \cong \mathbb{R}^5$, the vertices of $\nabla_1$ are the rows of the matrix 
\[ \left( \begin{array}{ccccc}
-1 & -1 & -1 & -1 & 0\\
3 & -1 & -1 & -1 & 0 \\
-1 & 3 & -1 & -1 & 0 \\
-1 & -1 & 3 & -1 & 0 \\ 
-1 & -1 & -1 & 3 & 0 \\
-1 & -1 & -1 & -1 & 4
\end{array} \right)\]
and the vertices of $\nabla_2$ are the rows of 
\[ \left( \begin{array}{ccccc}
0 & 0 & 0 & 0 & -1 \\
2 & 0 & 0 & 0 & -1 \\
0 & 2 & 0 & 0 & -1 \\
0 & 0 & 2 & 0 & -1 \\ 
0 & 0 & 0 & 2 & -1 \\
0 & 0 & 0 & 0 & 1 
\end{array} \right).\]
The vertices of $\nabla$ consist of the twelve rows of the above two matrices.  

We will need the following convex geometry result:

\begin{prop} \label{newt} Let $p : X \rightarrow Y$ be a linear map between finite dimensional real vector spaces, and let $\varphi$ be a real-valued lower convex function on $Y$.  Let $p^{*} : Y^{*} \rightarrow X^{*}$ be the dual map.  Then $Newt(\varphi \circ p) = p^{*}(Newt(\varphi))$.  \end{prop}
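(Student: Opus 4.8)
The plan is to prove the set equality by establishing the two inclusions separately, after recording a convenient reformulation. Note that $u\in Newt(\psi)$ holds precisely when the linear functional $-u$ is a pointwise minorant of $\psi$, i.e.\ $\langle -u,v\rangle\le\psi(v)$ for all $v$. Since $p^*$ is linear it commutes with negation, so writing $m=-u$ the desired identity $Newt(\varphi\circ p)=p^*(Newt(\varphi))$ is equivalent to the equality of sets of linear minorants
\[
\{\,n\in X^*:\ n\le\varphi\circ p\,\}=\{\,p^*m:\ m\in Y^*,\ m\le\varphi\,\},
\]
where $n\le\psi$ abbreviates $\langle n,\cdot\rangle\le\psi$ and I use the adjunction $\langle p^*m,x\rangle=\langle m,p(x)\rangle$. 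The inclusion $\supseteq$ is then immediate and needs no convexity: if $m\le\varphi$ then $\langle p^*m,x\rangle=\langle m,p(x)\rangle\le\varphi(p(x))=(\varphi\circ p)(x)$, so $p^*m\le\varphi\circ p$.

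For the inclusion $\subseteq$, which is the substance of the proof, I would reduce to an extension problem. Fix $n\le\varphi\circ p$. First I show $n$ annihilates $\ker p$: if $x\in\ker p$ then $(\varphi\circ p)(tx)=\varphi(0)$ for all $t\in\mathbb{R}$, so $t\,n(x)=n(tx)\le\varphi(0)$ for all $t$, forcing $n(x)=0$. Hence $n$ descends to a well-defined linear functional $\ell$ on the image subspace $W=p(X)\subseteq Y$ by $\ell(p(x))=n(x)$, and $n\le\varphi\circ p$ translates into $\ell\le\varphi$ on $W$. To produce $m\in Y^*$ with $m\le\varphi$ and $p^*m=n$, it now suffices to extend $\ell$ from $W$ to a linear functional $m$ on all of $Y$ with $m\le\varphi$; such an $m$ automatically satisfies $\langle p^*m,x\rangle=\langle m,p(x)\rangle=\ell(p(x))=n(x)$, i.e.\ $p^*m=n$.

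The remaining step is a Hahn--Banach extension dominated by the convex function $\varphi$, which I would carry out one dimension at a time since $Y$ is finite dimensional. Given an extension of $\ell$ to a subspace $W'\supseteq W$ satisfying $\ell\le\varphi$ on $W'$, and a vector $y_0\notin W'$, assigning the value $\langle m,y_0\rangle=c$ preserves the bound $m\le\varphi$ on $W'\oplus\mathbb{R}y_0$ exactly when
\[
\sup_{w''\in W'}\big(\ell(w'')-\varphi(w''-y_0)\big)\ \le\ c\ \le\ \inf_{w'\in W'}\big(\varphi(w'+y_0)-\ell(w')\big).
\]
Such a $c$ exists if and only if $\ell(w'+w'')\le\varphi(w'+y_0)+\varphi(w''-y_0)$ for all $w',w''\in W'$, and this is exactly where convexity enters: the convexity inequality applied at the midpoint of $w'+y_0$ and $w''-y_0$ gives $2\varphi\big(\tfrac{w'+w''}{2}\big)\le\varphi(w'+y_0)+\varphi(w''-y_0)$, while $\ell(w'+w'')=2\ell\big(\tfrac{w'+w''}{2}\big)\le2\varphi\big(\tfrac{w'+w''}{2}\big)$ by the hypothesis on $W'$. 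Iterating over a basis extends $\ell$ to all of $Y$ and finishes the inclusion.

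I expect this last one-dimensional extension to be the only genuine obstacle: it is the sole place where convexity of $\varphi$ is actually used, and keeping the signs and the midpoint estimate straight is the delicate point. The descent of $n$ to $W$ and the inclusion $\supseteq$ are purely formal manipulations of the pairing, and the finite-dimensionality of $Y$ is what lets the single-step extension be iterated to completion without any limiting argument.
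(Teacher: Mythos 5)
Your reformulation in terms of linear minorants, the inclusion $p^*(Newt(\varphi)) \subseteq Newt(\varphi \circ p)$, the argument that $n$ annihilates $\ker p$, and the descent to $\ell$ on $W = p(X)$ are all correct, and your overall plan --- descend to the image, then extend a $\varphi$-dominated linear functional to all of $Y$ --- is the same as the paper's (the paper does the extension in one shot on a complementary subspace rather than dimension by dimension). The gap is in your one-dimensional extension step, and it is genuine. Your displayed condition on $c$ only enforces the bound $m \le \varphi$ at points of the form $w \pm y_0$ with $w \in W'$, i.e.\ at coefficient $t = \pm 1$ on $y_0$; reducing a general point $w + t y_0$ to this case by rescaling requires $\varphi$ to be positively homogeneous, which is not a hypothesis --- the proposition concerns arbitrary real-valued lower convex $\varphi$. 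The claimed ``exactly when'' is false: take $Y = \mathbb{R}$, $W' = \{0\}$, $\ell = 0$, $y_0 = 1$, $\varphi(y) = y^2$. Your condition permits every $c \in [-\varphi(-1), \varphi(1)] = [-1,1]$, but $cy \le y^2$ for all $y$ forces $c = 0$; for instance $c = 1$ gives $m(y) = y$, which exceeds $\varphi$ at $y = 1/2$. So the induction can output an $m$ with $p^*m = n$ that is not a minorant of $\varphi$, and the hard inclusion is not established.

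The step is repairable in two ways. First, quantify over dilations: the condition that is actually equivalent to $m \le \varphi$ on $W' \oplus \mathbb{R}y_0$ is
\[
\sup_{s > 0, \ w'' \in W'} \frac{\ell(w'') - \varphi(w'' - s y_0)}{s} \ \le\ c\ \le\ \inf_{t > 0,\ w' \in W'} \frac{\varphi(w' + t y_0) - \ell(w')}{t},
\]
and the compatibility of the supremum and infimum follows from convexity exactly as in your midpoint computation, but with weights $\tfrac{s}{s+t}$ and $\tfrac{t}{s+t}$: the corresponding weighted average of $w' + t y_0$ and $w'' - s y_0$ lies in $W'$ (the $y_0$ terms cancel), where $\ell \le \varphi$ applies. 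Second, alternatively, reduce to the homogeneous case before extending: if $\varphi(0) < 0$ both sides of the proposition are empty, and otherwise $\widehat{\varphi}(y) = \inf_{t>0} \varphi(ty)/t$ is a positively homogeneous lower convex function with the same linear minorants as $\varphi$ and satisfying $\widehat{\varphi} \circ p = \widehat{\varphi \circ p}$, so you may replace $\varphi$ by $\widehat{\varphi}$, after which your $t = \pm 1$ criterion and midpoint argument become valid. (In the paper's applications $\varphi$ is always piecewise linear on a fan, hence positively homogeneous, so your argument would suffice there; but as a proof of the proposition in the stated generality it has a hole.)
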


\begin{proof} First we show the containment $p^{*}(Newt(\varphi)) \subseteq Newt(\varphi \circ p)$.  We must show that for any $y \in Newt(\varphi)$, that $\langle p^{*}(y), z \rangle \geq -\varphi(p(z))$ for all $z \in X$.  This is true because $\langle p^{*}(y), z \rangle = \langle y, p(z) \rangle$, and $\langle y, p(z) \rangle \geq -\varphi(p(z))$ because $y \in Newt(\varphi)$.   

To show the reverse containment, let $x \in Newt(\varphi \circ p)$.  We first claim that because $x$ is such that $\langle x, z \rangle \geq -\varphi(p(z))$ for all $z \in X$, then $x \in p^*(Y^{*})$.  The latter condition is equivalent to $x$, regarded as a function on $X$, being constant on the fibers of $p$, which follows from the first condition because $-\varphi(p(z))$ is constant on the fibers of $p$ and $x$ is linear.  Therefore any $x$ satisfying the first condition descends to a function $x'$ on $p(X) \subseteq Y$, and is completely determined by its values on this subspace.  We choose a complementary subspace $S \subseteq Y$ to $p(X)$ and choose a linear function $L$ on $S$ such that $L(y) \geq -\varphi(y)$ for all $y \in S$, which is possible since $\varphi$ is lower convex.  Let $L'$ be the unique linear function such that $L'|_S = L$ and $L'|_{p(X)} = x'$.  Then by convexity of $\varphi$, $L'(y) \geq -\varphi(y)$ for all $y \in Y$.  Regarding $L'$ as an element of $Y^{*}$, this means that $L' \in Newt(\varphi)$, and we have that $p^{*}(L') = x$ since $L'|_{p(X)} = x'$, which proves the containment.
\end{proof}

\begin{lemma} \label{imagething} With $g^{*} : N'_\mathbb{R} \rightarrow N_\mathbb{R}$ defined as above, $g^{*}(\nabla) = \Delta^{*}_{P(2,4)}.$ \end{lemma}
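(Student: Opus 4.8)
The plan is to combine Proposition~\ref{newt} with the linearity of $g^*$, reducing the claim to an explicit identification of two pulled-back piecewise-linear functions on $M_\mathbb{R}$. First I would use that $g^*$ is linear, hence commutes with convex hulls:
$$g^*(\nabla) = g^*(Conv(\nabla_1,\nabla_2)) = Conv(g^*(\nabla_1),\, g^*(\nabla_2)).$$
Writing $\nabla_i = Newt(\varphi_i)$ and applying Proposition~\ref{newt} with $p = g : M_\mathbb{R}\to M'_\mathbb{R}$ (so that $p^* = g^*$) and $\varphi = \varphi_i$ gives $g^*(\nabla_i) = Newt(\varphi_i\circ g)$. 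Thus the whole statement comes down to understanding the two functions $\varphi_1\circ g$ and $\varphi_2\circ g$ on $M_\mathbb{R}$.

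The key step is to show that $\varphi_1\circ g$ is exactly the support function $\psi$ of $\Delta_{P(2,4)}$, i.e.\ the function that is linear on each cone of $\Sigma(\Delta_{P(2,4)})$ and equal to $1$ at every vertex $u_i$. By the Remark, $g$ is a map of fans $\Sigma(\Delta_{P(2,4)})\to\Sigma(\Delta)$, so each cone of the source maps into a cone of $\Sigma(\Delta)$ on which $\varphi_1$ is linear; hence $\varphi_1\circ g$ is linear on each cone of $\Sigma(\Delta_{P(2,4)})$. A direct computation of the six image vectors $g(u_i)$, expressing each in the cone of $\Sigma(\Delta)$ containing it, shows $\varphi_1(g(u_i)) = 1$ for all $i$. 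Since two functions that are linear on each cone and agree on all ray generators must coincide, $\varphi_1\circ g = \psi$. Finally I would invoke the standard fact that $Newt(\psi) = \Delta^*_{P(2,4)}$ for the anticanonical support function of a reflexive polytope, which follows immediately from the definitions: the condition $\langle u_i,u\rangle\ge -1$ for all $i$ is equivalent to $\langle u,v\rangle\ge -\psi(v)$ for all $v$, once $v$ is written as a nonnegative combination of the ray generators of the cone containing it. This yields $g^*(\nabla_1) = \Delta^*_{P(2,4)}$.

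It then remains to show that $g^*(\nabla_2)$ contributes nothing to the convex hull, i.e.\ $g^*(\nabla_2)\subseteq\Delta^*_{P(2,4)}$. Because $\varphi\le\varphi'$ pointwise implies $Newt(\varphi)\subseteq Newt(\varphi')$ directly from the definition of $Newt$, it suffices to prove $\varphi_2\circ g\le\varphi_1\circ g$ on all of $M_\mathbb{R}$. Both functions are linear on each cone of $\Sigma(\Delta_{P(2,4)})$, so I would only check the inequality at the ray generators: the same computation gives $\varphi_2(g(u_i))\le 1 = \varphi_1(g(u_i))$ (with equality at $u_1,u_2,u_4,u_6$ and value $0$ at $u_3,u_5$), and linearity on each cone together with nonnegativity of the cone coordinates propagates it everywhere. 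Hence $g^*(\nabla_2) = Newt(\varphi_2\circ g)\subseteq Newt(\varphi_1\circ g) = \Delta^*_{P(2,4)}$, and combining with the previous step,
$$g^*(\nabla) = Conv\big(\Delta^*_{P(2,4)},\, g^*(\nabla_2)\big) = \Delta^*_{P(2,4)}.$$

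I expect the main obstacle to be the identification $\varphi_1\circ g = \psi$: conceptually it encodes that the quartic (degree-four) part of the nef partition of $\mathbb{P}^5$ pulls back to the anticanonical class of $P(2,4)$ under adjunction for the quadric embedding, but the one technical point requiring care is the non-simplicial square cone $C(Conv(u_1,u_2,u_4,u_6))$. There one must confirm that its image under $g$ lies in a single cone of $\Sigma(\Delta)$ — it lands in the face of $\Delta$ spanned by $e_1$, $e_2$, $e_5$ and $-e_1-\cdots-e_5$ — so that $\varphi_i\circ g$ is genuinely linear on it and the comparisons at the four ray generators are actually sufficient.
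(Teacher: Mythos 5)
Your proposal is correct and follows essentially the same route as the paper: both apply Proposition~\ref{newt} to identify $g^*(\nabla_i) = Newt(\varphi_i\circ g)$, recognize $\varphi_1\circ g$ as the anticanonical support function of $\Sigma(\Delta_{P(2,4)})$ (value $1$ on all $u_i$, so its Newton polytope is $\Delta^*_{P(2,4)}$), observe $\varphi_2\circ g \le \varphi_1\circ g$ (values $1$ on $u_1,u_2,u_4,u_6$ and $0$ on $u_3,u_5$), and conclude via the convex hull. Your additional care about the non-simplicial square cone and the fan-map justification simply fills in details the paper dismisses as ``straightforward to check.''
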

  
\begin{proof} First we apply Proposition \ref{newt} with $\varphi = \varphi_1$ and $p = g$.  It is straightforward to check that $\varphi_1 \circ g$ is the piecewise linear function $h_1$ on $\Sigma(\Delta_{P(2,4)})$ associated to the anticanonical bundle of $P(2,4)$, in other words, the piecewise linear function which is equal to 1 on the primitive integral generators of all rays in the fan, $u_i$ for $1 \leq i \leq 6$.  Since $\Delta^{*}_{P(2,4)} = Newt(h_1)$, the proposition gives us that $g^{*}(Newt(\varphi_1)) = \Delta^{*}_{P(2,4)}$.

One can also check that $\varphi_2 \circ g$ gives a piecewise linear function $h_2$ on $\Sigma(\Delta_{P(2,4)})$ which is equal to one on $u_1$, $u_2$, $u_4$ and $u_6$ (the vertices of the square face of $\Delta_{P(2,4)}$), and zero on all other rays.  Because $h_2 \leq h_1$, $$g^{*}(Newt(\varphi_2)) = Newt(h_2) \subseteq Newt(h_1) = \Delta^{*}_{P(2,4)}.$$
Thus we have that
\begin{gather*}
g^{*}(\nabla) = Conv(g^{*}(Newt(\varphi_1)), g^{*}(Newt(\varphi_2))) = \\
Conv(g^{*}(Newt(\varphi_1)) = \Delta^{*}_{P(2,4)}.
\end{gather*} \end{proof}

Ultimately, we would like to extend $g^*$ to a toric morphism between compact toric varieties containing the (partially) resolved Calabi-Yau families $\mathcal{X}^*_C$ and $\mathcal{X}^*_{BB}$, so that $g^*$ resolves a general member of $\mathcal{X}^*_C$ to a smooth member of $\mathcal{X}^*_{BB}$.  To do this, we need to deal with the fact that these compactified and resolved families live in toric varieties which are ``MPCP resolutions'' as defined by Batyrev in \cite{Ba}.  

Given a reflexive polytope $\Delta$, an MPCP resolution of $X(\Delta)$ is defined by a fan $\widehat{\Sigma}(\Delta)$ which is a crepant subdivision of $\Sigma(\Delta)$.  To be an MPCP resolution, $\widehat{\Sigma}(\Delta)$ must be complete, simplicial, and have a set of rays equal to the set of rays over nonzero lattice points in $\Delta$.  If a reflexive polytope contains a large number of lattice points, as in the present case, then dealing with its MPCP resolutions directly can be difficult because the resolutions are combinatorially very complicated.  We will approach this problem indirectly, by showing that an MPCP resolution $\widehat{\Sigma}(\Delta^*_{P(2,4)})$ satisfying certain conditions can be lifted to an MPCP resolution $\widehat{\Sigma}(\nabla)$, so that the toric morphism $g^* : \widehat{\Sigma}(\nabla) \rightarrow \widehat{\Sigma}(\Delta^*_{P(2,4)})$ exists.  In other words, the image of any cone $C \in \widehat{\Sigma}(\nabla)$ under $g^*$ will be contained in some cone of $\widehat{\Sigma}(\Delta^*_{P(2,4)})$. 

We introduce the following definition:

\begin{definition} \label{conelifting} Let $\Delta \subseteq N_\mathbb{R}$ and $\Delta' \subseteq N'_\mathbb{R}$ be reflexive polytopes, let $\Sigma_1$ be a crepant subdivision of $\Sigma(\Delta)$, and let $L : N'_\mathbb{R} \rightarrow N_\mathbb{R}$ be a $\mathbb{Z}$-linear map.  We say that $(\Delta', \Delta, \Sigma_1, L)$ satisfies the {\it cone lifting property} if the following holds: if $C \subseteq N_\mathbb{R}$ is any cone whose rays are rays over lattice points in $\Delta$, and which is contained in some cone $C'$ of $\Sigma_1$, then the intersection fan $L^{-1}(C) \cap \Sigma(\Delta')$ in $N'_\mathbb{R}$ consists of cones whose rays are rays over lattice points in $\Delta'$.  (The intersection fan $L^{-1}(C) \cap \Sigma(\Delta')$ is defined as the fan consisting of the cones $L^{-1}(C) \cap D$ where $D$ is any cone of $\Sigma(\Delta')$.) \end{definition}

The cone lifting property will be used to solve the problem of extending the toric morphism $g^*$, via the following lemma:

\begin{lemma} \label{cliftlemma} Suppose that $(\Delta', \Delta, \Sigma_1, L)$ satisfies the cone lifting property.  Let $\widehat{\Sigma}(\Delta)$ be an MPCP subdivision of $\Sigma(\Delta)$ which is also a subdivision of $\Sigma_1$.  Then there exists an MPCP subdivision $\widehat{\Sigma}(\Delta')$ of $\Sigma(\Delta')$ such that the morphism of fans $L : \widehat{\Sigma}(\Delta') \rightarrow \widehat{\Sigma}(\Delta)$ exists. \end{lemma}

\begin{proof} Let $\Sigma_{int}$ be the intersection fan consisting of all cones $L^{-1}(C) \cap D$, where $C$ is any cone of $\widehat{\Sigma}(\Delta)$ and $D$ is any cone of $\Sigma(\Delta')$.  Then $\Sigma_{int}$ will be a projective subdivision of $\Sigma(\Delta')$, and it will be crepant because of the cone lifting property.  If $\Sigma_{int}$ is not already a MPCP subdivision of $\Sigma(\Delta')$, then it can be further subdivided into an MPCP subdivision $\widehat{\Sigma}(\Delta')$.  By construction, the image of every cone of $\widehat{\Sigma}(\Delta')$ under $L$ will be contained in some cone of $\widehat{\Sigma}(\Delta)$. 
\end{proof}

\begin{prop} \label{cliftcriteria} With the same notation as Definition \ref{conelifting}, suppose that $(\Delta', \Delta, \Sigma_1, L)$ are such that:

1. For each cone $D \in \Sigma(\Delta')$, $L(D)$ is a union of cones in $\Sigma_1$.

2. Considering $L : N' \rightarrow N$ as a $\mathbb{Z}$-linear map, the kernel of $L$ is one-dimensional and generated over $\mathbb{Z}$ by $v \in N'$, with both $v$ and $-v$ contained in $\Delta'$.

3. For each lattice point $\ell \in \Delta$, there is a lattice point $\ell' \in \Delta'$ with $L(\ell') = \ell$.

Then $(\Delta', \Delta, \Sigma_1, L)$ satisfies the cone lifting property. \end{prop}

\begin{proof} According to Definition \ref{conelifting}, we need to show that for any cone $C$ which is generated by lattice points in $\Delta$ and contained in some cone $C'$ of $\Sigma_1$, the rays of the intersection fan $L^{-1}(C) \cap \Sigma(\Delta')$ are rays over lattice points of $\Delta'$.  

Let $r$ be a ray of the intersection fan $L^{-1}(C) \cap \Sigma(\Delta')$.  Then by definition, $r = L^{-1}(E) \cap D$, where $D$ is some cone of $\Sigma(\Delta')$ and $E$ is some face of $C$.  (All the faces of $L^{-1}(C)$ must be inverse images of faces of $C$.)  Now we claim that either $r$ is the ray over $v$ or $-v$, or $L$ maps $r$ bijectively onto $E \cap L(D)$.  If $r$ is the ray over $v$ or $-v$, then $r$ is a ray over a lattice point in $\Delta'$ and we have nothing to prove.  Otherwise, if $p \in E \cap L(D)$, then there must be some $p' \in D$ such that $L(p') = p$.  Since $p \in E$, we have that $p' \in L^{-1}(E)$ also.  Thus $p' \in r = L^{-1}(E) \cap D$ and $L$ is surjective from $r$ onto $E \cap L(D)$; since $r$ is one-dimensional, it must also be injective.  

Now by assumption, $C$ and therefore $E$ is contained in a cone of $\Sigma_1$, say $C_1$.  Since $L(D)$ is a union of cones in $\Sigma_1$, $L(D) \cap C_1$ must be a (not necessarily proper) face of $C_1$, say $F_1 \subseteq C_1$.  Then $E \cap L(D) = E \cap F_1$.  In general, if a polyhedral cone $A_1$ is contained in another polyhedral cone $A_2$ and we intersect $A_1$ with a face of $A_2$, we must get a face of $A_1$.  So because $E \subseteq C_1$ and $F_1$ is a face of $C_1$, $E \cap L(D) = E \cap F_1$ must be a face of $E$.  But $E \cap L(D)$ is also ray from the previous paragraph, and since $E$ is a face of $C$, the ray must be a ray $R$ of $C$, which must be a ray over a lattice point in $\Delta$.  

Thus the ray $r$ is a ray of the two-dimensional intersection fan $L^{-1}(R) \cap \Sigma(\Delta')$.  Let $\ell \in \Delta$ be the primitive integral generator of $R$, and let $\ell' \in \Delta'$ be a lattice point such that $L(\ell') = \ell$, which exists because of condition 3 in the Proposition statement.

The support of the intersection fan $L^{-1}(R) \cap \Sigma(\Delta')$, which is just $L^{-1}(R)$, can be expressed as the union $$ L^{-1}(R) = Cone(\ell',v) \cup Cone(\ell', -v).$$  By Proposition 3.1 from \cite{nill}, $Cone(\ell', v) \cap \Delta'$ will consist of $Conv(\ell',v,0)$, or of $Conv(z,\ell',v,0)$ for some $z$ which is an integer linear combination of $\ell'$ and $v$, and in particular a lattice point, and $z \in \Delta'$.  (This follows from the fact that since $\ell' \neq -v$ by assumption, we must be in case 1 or 3 of Proposition 3.1.)  Likewise, $Cone(\ell',-v) \cap \Delta$ will consist either of $Conv(\ell',-v,0)$ or $Conv(w,\ell',-v,0)$ for some lattice point $w \in \Delta$.  See Figure \ref{fanpic} for an illustration of a possible case.  With any of these possibilities, we can conclude that all the rays of the intersection fan $L^{-1}(R) \cap \Sigma(\Delta')$ are rays over lattice points in $\Delta$. \end{proof}  

\begin{figure}[h] 
  \begin{center} \scalebox{0.75}{\input{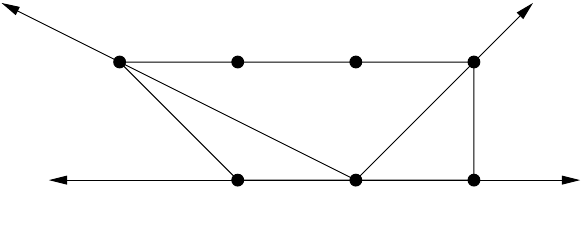_t}} \end{center}
   \caption{\label{fanpic} Illustration of a possible configuration of the fan $L^{-1}(R) \cap \Sigma(\Delta')$} 
\end{figure}

We want to obtain a fan $\Sigma_1$, which is a crepant subdivision of $\Sigma(\Delta^*_{P(2,4)})$, such that $(\nabla, \Delta^*_{P(2,4)}, \Sigma_1, g^*)$ satisfies the conditions of Proposition \ref{cliftcriteria} and therefore satisfies the cone lifting property.  We need $\Sigma_1$ to satisfy the condition that for every cone $C \in \Sigma(\nabla)$, $g^*(C)$ is equal to a union of cones in $\Sigma(\Delta^*_{P(2,4)})$.  The data for such a fan $\Sigma_1$ is given in Table \ref{sigma1}, and the necessary facts can be verified using the Polyhedra package \cite{birkner} for the computer algebra program Macaulay2 \cite{M2}.   

In order to verify condition 1 of Proposition \ref{cliftcriteria}, we took the following approach: given any cone $C \in \Sigma(\nabla)$, $C = Cone(w_1, \dots, w_n)$ with $w_1, \dots, w_n$ vertices of $\nabla$, $g^*(C) = Cone(g^*(w_1),\dots,g^*(w_n))$, a cone of dimension $d$.  To show that $g^*(C)$ is a union of cones of $\Sigma_1$, it suffices to show that the volume of the polytope $V = Conv(0,g^*(w_1),\dots,g^*(w_n))$ is equal to the sum $$\sum_{U \in S} vol (U),$$
where $S$ is the set of polytopes 
\begin{gather*}
\{ U = Conv(0, t_1, \dots, t_m) \ | \ U \subseteq V, \ dim(U) = d, \
 Cone(t_1, \dots, t_m) \in \Sigma_1 \\
\hbox{and} \ t_i \in \{v_1,\dots,v_{13} \} \ \forall 1 \leq i \leq m \}.
\end{gather*}
The volume is taken with respect to any fixed choice of volume form on $Span_\mathbb{R}(g^*(C))$.  The number of cones in $\Sigma(\nabla)$ of any dimension is sufficiently small that this computation can be performed in a few minutes with a Macaulay2 script.  It is also easy to check that the second and third conditions of Proposition \ref{cliftcriteria} hold, using Macaulay2 where necessary.

\begin{table}
\begin{tabular}{ |l |m{9cm} | }
  \hline
 Generators of rays of $\Sigma_1$ & {\begin{gather*} v_1 = (3,-1,-1,3), \ v_2 = (3,-1,-1,-1), \\ 
 v_3 = (-1,3,-1,3), \ v_4 = (-1,-1,3,-1), \\
v_5 = (-1,-1,-1,-1), \ v_6 = (-1,3,-1,-1), \\ 
v_7 = (1,-1,0,1), \ v_8 = (1,-1,0,-1), \\
v_9 = (-1,1,0,1), \ v_{10} = (-1,-1,2,-1), \\ 
v_{11} = (-1,-1,0,-1), \ v_{12} = (-1,1,0,-1), \\ v_{13} = (1,1,-1,1) \end{gather*}} \\
  \hline
  Maximal cones of $\Sigma_1$ & 
   \begin{gather*} \{ v_5,v_6,v_3,v_{13} \}, \ \{ v_5,v_2,v_1,v_{13} \}, \\
\{ v_2,v_4,v_1,v_{13} \}, \ \{ v_6,v_4,v_3,v_{13} \}, \\
\{ v_5,v_2,v_6,v_{13} \}, \ \{ v_2,v_6,v_4,v_{13} \}, \\
\{ v_5,v_1,v_3,v_{13} \}, \ \{ v_4,v_1,v_3,v_{13} \}, \\
\{ v_{11},v_{12},v_{10},v_9 \}, \ \{ v_5,v_6,v_3,v_{11},v_{12},v_9 \}, \\
\{ v_{11},v_8,v_{10},v_7 \}, \ \{ v_5,v_2,v_1,v_{11},v_8,v_7 \}, \\
\{ v_2,v_4,v_1,v_8,v_{10},v_7 \}, \ \{ v_6,v_4,v_3,v_{12},v_{10},v_9 \}, \\
\{ v_{11},v_8,v_{12},v_{10} \}, \ \{ v_2,v_5,v_6,v_{11},v_8,v_{12} \}, \\
\{ v_2,v_6,v_4,v_8,v_{12},v_{10} \}, \ \{ v_{11},v_{10},v_7,v_9 \}, \\
\{ v_5,v_1,v_3,v_{11},v_7,v_9 \}, \ \{ v_4,v_1,v_3,v_{10},v_7,v_9 \} 
   \end{gather*} \\
  \hline
\end{tabular}
\caption{Data for fan $\Sigma_1$ in $N_\mathbb{R}$}
\label{sigma1}
\end{table}

$\Sigma_1$ was constructed by starting with the polyhedron $Q \subseteq N_\mathbb{R} \oplus \mathbb{R}$ defined by
$$Q = \{ (n, r) \ | \ n \in \Delta^*_{P(2,4)}, \ r \geq \varphi(n) \}$$
where $\varphi : N_\mathbb{R} \rightarrow \mathbb{R}$ is the piecewise linear support function which is identically equal to 1 on the boundary of $\Delta^*_{P(2,4)}$.  $Q$ is a polyhedron extending infinitely upwards with two types of faces, unbounded or ``vertical'' faces, and bounded or ``lower'' faces.  (See \cite{GBB}, pp.~20-22, where this type of polyhedron and its dual fan are analyzed further.)  We then take the convex hull of $Q$ with the points $(v_i, 1 - \epsilon)$, where $7 \leq i \leq 13$ and $\epsilon$ is a small rational number, to get a new polytope $Q'$.  The lower faces of $Q'$ are the graph of a piecewise linear function over $\Delta^*_{P(2,4)}$, and this function is strictly convex on the fan $\Sigma_1$.  

Note that $v_7, \dots, v_{12}$ are the images of the vertices of $\nabla_2$ under $g^*$, while $v_{13}$ is the center of the unique two-dimensional square face of $\Delta^*_{P(2,4)}$, $$S_1 = Conv(v_1,v_2,v_3,v_6).$$  The images of faces of $\nabla_1$, which are all simplices, induce the subdivision of $S_1$ in Figure \ref{square}, because a tetrahedral face of $\nabla_1$ is mapped to $S_1$ by flattening.

\begin{figure}[h] 
  \begin{center} \scalebox{0.5}{\input{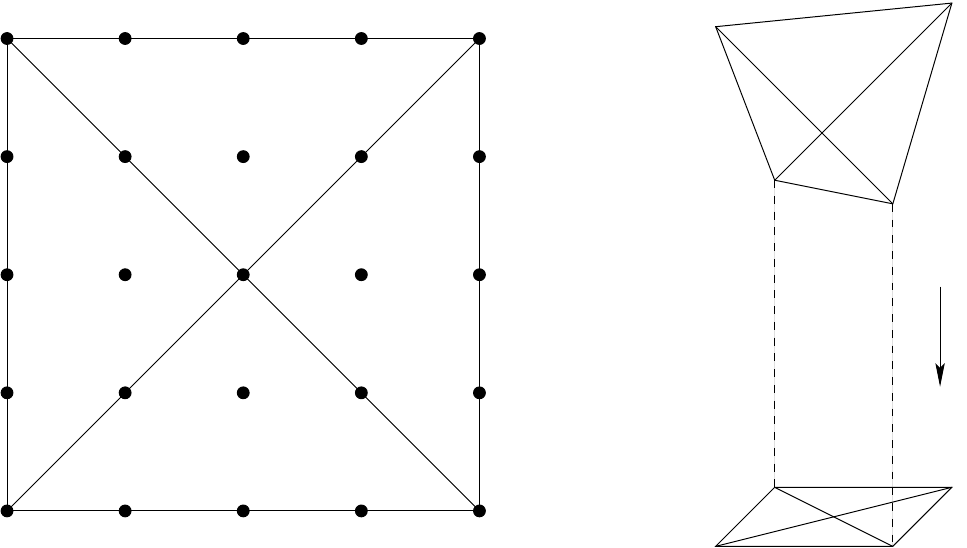_t}} \end{center}
   \caption{\label{square} Subdivision of the square $S_1$ in the fan $\Sigma_1$, which is induced by the linear map $g^*$} 
\end{figure}

By the preceding calculations, it follows that $(\nabla,\Delta^*_{P(2,4)},\Sigma_1, g^*)$ satisfies the cone lifting property, and therefore by Lemma \ref{cliftlemma} we can pick any MPCP resolution $\widehat{\Sigma}(\Delta^*_{P(2,4)})$ which is a subdivision of $\Sigma_1$, and lift it to an MPCP resolution $\widehat{\Sigma}(\nabla)$ such that the map $g^* : \widehat{\Sigma}(\nabla) \rightarrow \widehat{\Sigma}(\Delta^*_{P(2,4)})$ exists.  We can now state our main result:

\begin{thm} There exist MPCP resolutions such that the map of fans $g^*: \widehat{\Sigma}(\nabla) \rightarrow \widehat{\Sigma}(\Delta^*_{P(2,4)})$ exists.  The induced map between toric varieties restricts to a birational morphism $g^* : \mathcal{X}^*_{BB} \rightarrow \mathcal{X}^*_C$ between the compactified and (partially) resolved Calabi-Yau families $\mathcal{X}^*_{BB}$ and $\mathcal{X}^*_C$. \end{thm}

We can ask more about the behavior of $g^*$, in particular, whether it is a resolution of singularities.  Since we know the family $\mathcal{X}^*_{BB}$ is generically smooth, this would mean verifying that $g^*$ is an isomorphism away from the nodal singularities in members of the family $\mathcal{X}^*_C$.  This implied by the following general fact: if $f: X \rightarrow Y$ is a regular birational map of complex projective varieties, $X$ is smooth with trivial canonical bundle, and $Y$ is normal, then $f$ must be an isomorphism onto the nonsingular locus of $Y$.

\section{Another example}

To show that the same methods can yield results in other cases, we present a brief analysis of the example in section 3.3 in \cite{morrison}.  In this example, Morrison describes smoothing a Calabi-Yau hypersurface in the weighted projective space $\mathbb{P}^{(1,1,2,2,2)}$ by embedding $\mathbb{P}^{(1,1,2,2,2)}$ in $\mathbb{P}^5$ torically as the solution of a quadratic binomial equation.  (Indeed, if $z_0, \dots, z_5$ are homogeneous coordinates on $\mathbb{P}^5$, then the variety defined by $z_0 z_1 = z^2_2$ is isomorphic to $\mathbb{P}^{(1,1,2,2,2)}$.)  If the quadratic binomial equation is deformed to a generic quadratic, the Calabi-Yau hypersurface will deform to a generic $(2,4)$ complete intersection in $\mathbb{P}^5$.  Morrison then gives a birational morphism between the mirror of the $(2,4)$ family and a subfamily of the mirror to the $\mathbb{P}^{(1,1,2,2,2)}$ family, which represents the mirror transition.

According to our approach, the mirror transition should be induced by the linear dual of the map that torically embeds $\mathbb{P}^{(1,1,2,2,2)}$ into $\mathbb{P}^5$.  A fan for $\mathbb{P}^{(1,1,2,2,2)}$ is contained in $M_\mathbb{R} \cong \mathbb{R}^4$ and given by the cones over the faces of the reflexive polytope $P$ whose vertices are $(-1,-2,-2,-2)$ and $f_1$, $f_2$, $f_3$, $f_4$ (the standard basis).  A toric embedding is given by the linear map $h : M_\mathbb{R} \rightarrow M'_\mathbb{R}$ which maps $\Sigma(P)$ to the fan $\Sigma(\Delta)$ for $\mathbb{P}^5$ (the same fan used previously), and has the matrix 
\[ \left( \begin{array}{cccc}
0 & 0 & 0 & 1 \\
0 & 0 & 1 & 0 \\
0 & 1 & 0 & 0 \\
1 & 0 & 0 & 0 \\ 
2 & 0 & 0 & 0 \\
\end{array} \right).\]
The dual map $h^* : N'_\mathbb{R} \rightarrow N_\mathbb{R}$ is given by the transpose.  
The lattice points contained in $P$ are its vertices along with the point $(0,-1,-1,-1)$, so by Batyrev's construction the mirror family is given by (a Calabi-Yau compactification of)
$$-1+a_1X_1+a_2 X_2 + a_3 X_3+a_4 X_4 + a_5 X^{-1}_1 (X_2 X_3 X_4)^{-2}+a_6 (X_2 X_3 X_4)^{-1} = 0$$
in $T= \hbox{Spec} \ \mathbb{C}[M]$, where $X_i = z^{f_i}$.  We identify the degenerate subfamily on the mirror as the subfamily satisfying $4a_1a_5 = a^2_6$, and refer to it as $\mathcal{X}^*_0$.  (This is the same as the condition $q_2 = 4$ in Morrison's description.)  As usual, by using the torus action we may assume without loss that $a_1 = \cdots = a_4 = 1$, and the degeneracy condition becomes $4a_5 = a^2_6$.  We can factor to obtain 
\begin{equation}
\label{factoredform}
-1+X_1(1+(a_6/2)(X_1X_2X_3X_4)^{-1})^2+X_2+X_3+X_4 = 0.
\end{equation}

For the mirror family to $(2,4)$ complete intersections in $\mathbb{P}^5$ we again take a Calabi-Yau compactification of the equations 
\begin{align*}
b_1Y_1+b_2Y_2+b_3Y_3+b_4Y_4 &= 1 \\
b_5Y_5+b_6(Y_1Y_2Y_3Y_4Y_5)^{-1} &= 1
\end{align*}
in $T'= \hbox{Spec} \ \mathbb{C}[M']$, where $Y_i = z^{e_i}$, with $e_1, \dots, e_5$ the standard basis of $M'$.  We can assume that all $b_i$ except $b_6$ equal 1 by using the torus action (note that in the previous example, the same equations were used, but we chose to set all coefficients other than $b_4$ to 1, rather than $b_6$).  Let us refer to this family with the single parameter $b_6$ as $\mathcal{X}^*_{(2,4)}$.  Then we have:

\begin{thm} After setting $b_6 = a_6/2$, the families $\mathcal{X}^*_{(2,4)}$ and $\mathcal{X}^{*}_0$ are birationally equivalent via the map the map $f: T' \rightarrow T$ given by $X_1 = Y_4 Y^2_5$, $X_2 = Y_3$, $X_3 = Y_2$, $X_4 = Y_1$.
\end{thm}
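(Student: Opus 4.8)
The plan is to follow the same strategy as in the proof of Theorem \ref{birational1}: use the factored form \eqref{factoredform} of the defining equation of $\mathcal{X}^*_0$, verify directly that $f$ carries members of $\mathcal{X}^*_{(2,4)}$ into $\mathcal{X}^*_0$, and then exhibit an explicit rational inverse. The factoring has, in effect, already been carried out in \eqref{factoredform}, where the degeneracy condition $4a_5 = a_6^2$ is precisely what allows the three Laurent monomials $X_1$, $a_6(X_2X_3X_4)^{-1}$ and $a_5X_1^{-1}(X_2X_3X_4)^{-2}$ of the full mirror family to be collected into the single perfect square $X_1(1+(a_6/2)(X_1X_2X_3X_4)^{-1})^2$.

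For the forward direction I would pull back the left-hand side of \eqref{factoredform} along $f$, the one nontrivial point being the square term. Since $X_1X_2X_3X_4 = Y_4Y_5^2\cdot Y_3\cdot Y_2\cdot Y_1 = Y_1Y_2Y_3Y_4Y_5^2$, setting $b_6 = a_6/2$ and invoking the second defining equation of $\mathcal{X}^*_{(2,4)}$ yields the key identity
\[
(a_6/2)(X_1X_2X_3X_4)^{-1} = b_6(Y_1Y_2Y_3Y_4Y_5)^{-1}Y_5^{-1} = (1-Y_5)Y_5^{-1} = Y_5^{-1}-1,
\]
so that $1+(a_6/2)(X_1X_2X_3X_4)^{-1} = Y_5^{-1}$ and the square term collapses to $X_1Y_5^{-2} = Y_4$. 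Together with $X_2 = Y_3$, $X_3 = Y_2$, $X_4 = Y_1$, the pullback of \eqref{factoredform} becomes $-1+Y_1+Y_2+Y_3+Y_4$, which vanishes by the first defining equation of $\mathcal{X}^*_{(2,4)}$. Hence $f$ maps $\mathcal{X}^*_{(2,4)}$ into $\mathcal{X}^*_0$.

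Next I would write down the candidate inverse. The relations $Y_1 = X_4$, $Y_2 = X_3$, $Y_3 = X_2$ are immediate from the definition of $f$, and the identity above suggests recovering the remaining coordinates by
\[
Y_5 = \bigl(1+(a_6/2)(X_1X_2X_3X_4)^{-1}\bigr)^{-1}, \qquad Y_4 = X_1Y_5^{-2} = X_1\bigl(1+(a_6/2)(X_1X_2X_3X_4)^{-1}\bigr)^2.
\]
This is a genuine rational map $T \rightarrow T'$, and I would check that, restricted to $\mathcal{X}^*_0$, it satisfies both defining equations of $\mathcal{X}^*_{(2,4)}$: the first is exactly \eqref{factoredform}, while for the second one substitutes these formulas and uses $(a_6/2)(X_1X_2X_3X_4)^{-1} = Y_5^{-1}-1$ to obtain $Y_5 + b_6(Y_1Y_2Y_3Y_4Y_5)^{-1} = Y_5 + (1-Y_5) = 1$. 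A short calculation then confirms that this map is a two-sided rational inverse to $f$ on corresponding members, giving the birational equivalence.

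The one point requiring care — and the step I expect to be the only real subtlety — is the factor $Y_5^2$ appearing in $X_1 = Y_4Y_5^2$. Inverting the monomial $X_1 = Y_4Y_5^2$ in isolation is two-to-one in $Y_5$, which might suggest that $f$ is a double cover rather than a birational map. The resolution is that one never has to extract a square root: the defining equations of the family pin down $Y_5$ \emph{rationally} as $(1+(a_6/2)(X_1X_2X_3X_4)^{-1})^{-1}$, so the apparent ambiguity is eliminated along the locus of $\mathcal{X}^*_0$ and the inverse is single-valued. On the level of character lattices $f$ corresponds to the injective linear map $h$ given above, so $f$ is dominant, and the explicit inverse shows that its restriction to the threefolds is birational.
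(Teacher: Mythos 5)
Your proposal is correct and follows essentially the same route as the paper: both use the factored form \eqref{factoredform} and exhibit the explicit rational inverse $Y_1 = X_4$, $Y_2 = X_3$, $Y_3 = X_2$, $Y_5 = (1+b_6(X_1X_2X_3X_4)^{-1})^{-1}$. The only cosmetic difference is that you take $Y_4 = X_1\bigl(1+(a_6/2)(X_1X_2X_3X_4)^{-1}\bigr)^2$ where the paper writes $Y_4 = 1-X_2-X_3-X_4$; these coincide on $\mathcal{X}^*_0$ by \eqref{factoredform}, and your write-up simply makes explicit the verifications the paper calls straightforward.
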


\begin{proof} Let $f_{b_6}$ be the restriction of $f$ to a single member of the family $\mathcal{X}^*_{(2,4)}$ with a particular value of $b_6 = a_6/2$.  By using the factored form of the equation for $\mathcal{X}^*_0$ (equation \eqref{factoredform}), it is straightforward to check that the rational map from $T$ to $T'$ defined by
\begin{align*}
Y_1 &= X_4 \\
Y_2 &= X_3 \\ 
Y_3 &= X_2 \\
Y_4 &= 1-X_2-X_3-X_4 \\
Y_5 &= (1+b_6(X_1X_2X_3X_4)^{-1})^{-1}
\end{align*}
is a birational inverse for $f_{b_6}$. \end{proof} 

This confirms our prediction, since the map $f$ is induced by the map $h^* : N'_\mathbb{R} \rightarrow N_\mathbb{R}$.  Like in the first example of quartic hypersurfaces in $P(2,4)$, one may ask whether the map $f$ can be extended to a larger domain, including more points of the ambient toric variety of the Batyrev-Borisov construction.  We will not attempt to answer this question fully, but make a few observations.  

First, the approach taken for the main $P(2,4)$ example in this paper cannot work in exactly the same fashion.  That is, there cannot exist MPCP subdivisions $\widehat{\Sigma}(P^*)$ and $\widehat{\Sigma}(\nabla)$ such that the map of fans $h^* : \widehat{\Sigma}(\nabla) \rightarrow \widehat{\Sigma}(P^*)$ exists.  This is because the kernel of $h^*$ is generated by $(0,0,0,2,-1)$, and while $(0,0,0,2,-1)$ is contained in $\nabla$, its negative $(0,0,0,-2,1)$ is not.  This means that if $F$ is any face of $\nabla$ intersecting the ray over $(0,0,0,-2,1)$, the cone over $F$ will be mapped by $h^*$ to a cone containing a nontrivial linear subspace, making a map of fans impossible.

In spite of this, it may be possible to throw out cones over badly-behaved faces $F$, and corresponding strata of the toric variety, and still obtain a map of compact Calabi-Yau families.  In general, many toric strata of the ambient toric variety will not intersect generic members of a Batyrev-Borisov family and can be removed.

Another difference between this example and the previous example is the behavior of the map $h^*$ on the reflexive polytopes $\nabla$ and $P^*$.  Recall by Lemma \ref{imagething} that we had $g^*(\nabla) = \Delta^*_{P(2,4)}$.  In this case, the corresponding equality $h^*(\nabla) = P^*$ will not be true.  

The ambient toric variety of $\mathcal{X}^*_{(2,4)}$ has a fan given by cones over the faces of the reflexive polytope $\nabla$, and $\nabla = Conv(Newt(\varphi_1), Newt(\varphi_2))$, where $\nabla$ and the piecewise linear functions $\varphi_i$ have the same definitions as before (in section 4).  Because $\varphi_1 \circ h^*$ is the piecewise linear function which equals one on the boundary of $P$, we have that $Newt(\varphi_1 \circ h^*) = P^*$.  However, $\varphi_2 \circ h^*$ is a function equaling 2 on the vertices $(-1,-2,-2,-2)$ and $(1,0,0,0)$ of $P$ and zero on all other vertices.  One can check that $Newt(\varphi_2 \circ h^*)$ is a polytope in $N_\mathbb{R}$ with vertices in the standard basis equal to the rows of 
\[ \left( \begin{array}{cccc}
-2 & 0 & 0 & 0 \\
2 & 0 & 0 & 0 \\
-2 & 2 & 0 & 0 \\
-2 & 0 & 2 & 0 \\ 
-2 & 0 & 0 & 2 \\
\end{array} \right).\]   
This polytope is not contained in $P^*$, so we do not have that $h^*(\nabla) = P^*$.  

Thus, although the dual map $h^*$ still induces a birational morphism between families, there are definite differences between the $P(2,4)$ case and the $\mathbb{P}^{(1,1,2,2,2)}$ case.  However, I expect that after deleting some parts of the ambient toric varieties on the mirror side, a similar approach could be used to construct a map between the resolved and compactified families $\mathcal{X}^*_{(2,4)}$ and $\mathcal{X}^*_C$.

{\it Acknowledgements.} I would like to thank the authors of the computer algebra program Macaulay2 \cite{M2} and its Polyhedra package \cite{birkner}.  I would also like to thank Nathan Ilten for helpful comments, and Mark Gross for introducing me to the problem of mirror symmetry for complete intersections in Grassmannians.

\bibliography{myrefs}

\begin{thebibliography}{10}

\bibitem{Ba}
V.~V. {Batyrev}.
\newblock {Dual Polyhedra and Mirror Symmetry for Calabi-Yau Hypersurfaces in
  Toric Varieties}.
\newblock {\em {J. Alg. Geom.}}, {\bf 3}:493--535, 1994.
\newblock arXiv:alg-geom/9310003.

\bibitem{BB}
V.~V. {Batyrev} and L.~A. {Borisov}.
\newblock {On Calabi-Yau Complete Intersections in Toric Varieties}.
\newblock December 1994.
\newblock arXiv:alg-geom/9412017.

\bibitem{BCKS}
V.~V. {Batyrev}, I.~{Ciocan-Fontanine}, B.~{Kim}, and D.~{van Straten}.
\newblock {Conifold Transitions and Mirror Symmetry for Calabi-Yau Complete
  Intersections in Grassmannians}.
\newblock October 1997.
\newblock arXiv:alg-geom/9710022.

\bibitem{birkner}
R.~Birkner.
\newblock {{\it Polyhedra}: A package for computations with convex polyhedral
  objects}.
\newblock {\em Journal of Software for Algebra and Geometry}, {\bf
  1}(1):11--15, 2009.

\bibitem{CLS}
D.~Cox, J.~Little, and H.~Schenk.
\newblock {\em Toric Varieties}.
\newblock American Mathematical Society, 2011.

\bibitem{M2}
Daniel~R. Grayson and Michael~E. Stillman.
\newblock Macaulay2, a software system for research in algebraic geometry.
\newblock Available at http://www.math.uiuc.edu/Macaulay2/.

\bibitem{GBB}
M.~{Gross}.
\newblock {Toric Degenerations and Batyrev-Borisov Duality}.
\newblock {\em {Math. Ann.}}, {\bf 333}:645--688, 2005.
\newblock arXiv:math/0406171.

\bibitem{mav}
A.~Mavlyutov.
\newblock {Degenerations and mirror contractions of Calabi-Yau complete
  intersections via Batyrev-Borisov mirror symmetry}.
\newblock 2011.
\newblock arXiv:0910.0793v2.

\bibitem{morrison}
David~R. Morrison.
\newblock {Through the Looking Glass}.
\newblock In {\em Mirror Symmetry III}, pages 263--277. American Mathematical
  Society and International Press, 1999.

\bibitem{nill}
B.~Nill.
\newblock {Gorenstein toric Fano varieties}.
\newblock {\em Manuscripta Math.}, {\bf 116}(2):183--210, 2005.

\bibitem{reid}
M.~Reid.
\newblock The moduli space of 3-folds with {$K = 0$} may nevertheless be
  irreducible.
\newblock {\em Math. Ann.}, {\bf 278}:329--334, 1987.

\bibitem{rossi}
Michele Rossi.
\newblock {Geometric Transitions}.
\newblock {\em J. Geom. Phys.}, {\bf 56}:1940--1983, 2006.
\newblock arXiv:math/0412514v1.

\end{thebibliography}
\bibliographystyle{plain}  
\end{document}